\newtheorem{theorem}[]{Theorem}
\begin{document}

\title{On the Diophantine equation $\frac{1}{a} + \frac{1}{b} = \frac{q + 1}{pq}$. 
}
\author{Jeremiah W. Johnson}
\address{University of New Hampshire, Manchester, NH 03101, USA}
\email{jeremiah.johnson@unh.edu}
\urladdr{https://mypages.unh.edu/jwjohnson}

\begin{abstract}
Let $p$ and $q$ be distinct primes such that $q+1 | p-1$. In this paper we find all integer solutions $a$, $b$ to the equation $1/a + 1/b  = (q+1)/pq$ using only elementary methods.
\end{abstract}

\maketitle

\section{Introduction}\label{S:1}
Problem A1 of the 2018 William Lowell Putnam Mathematical Competition asked competitors to find all ordered pairs of positive integers $(a, b)$ for which $1/a + 1/b  = 3 / 2018$ \cite{putnam}. This is an example of a Diophantine equation of the form $1/a + 1/b = q+1/pq$, where $p$ and $q$ are distinct primes such that $q+1 | p-1$. In this note we generalize this problem and obtain all integer solutions $a, b$ to the equation $1/a + 1/b = q+1/pq$ for distinct primes $p, q$ such that $q+1 | p-1$ using only elementary arithmetic. 

\begin{theorem}
Let 
\begin{equation}\label{e:1}
    \frac{1}{a} + \frac{1}{b}=\frac{q+1}{pq},
\end{equation}
where $p, q$ are distinct primes such that $q+1 | p-1$. There are two trivial solutions to equation \ref{e:1}; namely $a = p, b=pq$, and $a = pq, b=p$. The remaining integer solutions of equation \ref{e:1} are given by the following formul\ae:
\begin{equation}\label{e:2}
a = \frac{\zeta p^2 + pq}{q+1}\text{,\hspace{2em}}b = \frac{\zeta pq + q^2}{\zeta(q+1)}
\end{equation}
and 
\begin{equation}\label{e:3}
a = \frac{\zeta pq + q^2}{\zeta(q+1)}\text{,\hspace{2em}}b = \frac{\zeta p^2 + pq}{q+1},
\end{equation}
where $\zeta\equiv 1 \mod q+1$ and $\zeta | q^2$.
\end{theorem}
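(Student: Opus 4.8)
The plan is to clear denominators in \eqref{e:1} and complete a factorization. Cross--multiplying and rearranging turns \eqref{e:1} into $(q+1)ab-pq(a+b)=0$; multiplying by $q+1$ and adding $p^2q^2$ to both sides then yields the identity
\[
\bigl((q+1)a-pq\bigr)\bigl((q+1)b-pq\bigr)=p^2q^2 .
\]
Hence every integer solution of \eqref{e:1} arises from a factorization $p^2q^2=d\cdot e$ (with $d,e$ allowed to be negative) via $(q+1)a=pq+d$ and $(q+1)b=pq+e$, and conversely such a factorization yields a solution precisely when $q+1$ divides $pq+d$ and the resulting $a,b$ are nonzero. So the whole problem reduces to deciding which divisors $d$ of $p^2q^2$ meet this congruence condition, and then reading off $a$ and $b$.

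This is exactly where the hypothesis $q+1\mid p-1$ enters: it forces $p\equiv1$, hence $q\equiv-1$ and $pq\equiv-1\pmod{q+1}$, so the condition $q+1\mid pq+d$ collapses to $d\equiv1\pmod{q+1}$. Writing a divisor of $p^2q^2$ as $d=\pm p^iq^j$ with $0\le i,j\le2$, this congruence reads $\pm(-1)^j\equiv1\pmod{q+1}$, and since $q+1\ge3$ it pins the sign to $+$ when $j$ is even and to $-$ when $j$ is odd. I would then split into cases according to whether $p^2\mid d$, $p^2\mid e$, or neither --- not both, since $p\ne q$. If neither, then $i=1$ and $d=\pm pq^j$, and the congruence leaves only $(d,e)\in\{(p,\,pq^2),(pq^2,\,p)\}$, which produce the two trivial solutions, together with $d=-pq$, which must be discarded since it forces $a=b=0$. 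If $p^2\mid d$ --- the case $p^2\mid e$ being symmetric under the involution $a\leftrightarrow b$, which swaps $d$ and $e$ --- write $d=\zeta p^2$; then $d\mid p^2q^2$ is equivalent to $\zeta\mid q^2$, and $d\equiv1\pmod{q+1}$ to $\zeta\equiv1\pmod{q+1}$, and substituting $d=\zeta p^2$ and $e=q^2/\zeta$ into the expressions for $a,b$ gives \eqref{e:2}; the symmetric case gives \eqref{e:3}.

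For the converse direction I would check the two routine facts that make \eqref{e:2} and \eqref{e:3} legitimate: that the displayed quantities are nonzero integers, and that they genuinely differ from the two trivial solutions. Integrality of the numerators follows by reducing modulo $q+1$ with $p\equiv1$, $q\equiv-1$, $\zeta\equiv1$; for the second coordinate one also cancels $\zeta$ against $q^2$, which is legitimate because $\gcd(\zeta,q+1)=1$ (as $\zeta$ is, up to sign, a power of $q$). Non-vanishing amounts to $\zeta\ne-q/p$, automatic since $\zeta$ is an integer while $-q/p$ is not ($p$ and $q$ being distinct primes). I expect the genuine obstacle to be nothing deep, but rather the bookkeeping in the case split above: matching the admissible divisors $d$ bijectively with the three permitted values $\zeta\in\{1,\,q^2,\,-q\}$, handling the $a\leftrightarrow b$ symmetry without double-counting, and correctly discarding the borderline divisor $d=-pq$. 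Everything else is a one-line congruence check.
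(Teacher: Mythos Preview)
Your approach is correct and genuinely different from the paper's. The paper never writes down the factored form; instead it argues directly from $pq(a+b)=(q+1)ab$ that $p$ must divide $a$ or $b$, disposes of the case $p\mid a$ and $p\mid b$ by a size argument (yielding the trivial solutions), and in the case $p\mid a$, $p\nmid b$ sets $a=pk$, observes that $p\mid qk+k-q$, and \emph{defines} $\zeta$ by $qk+k-q=\zeta p$, from which the congruence $\zeta\equiv1\pmod{q+1}$ and the divisibility $\zeta\mid q^2$ are then extracted. Your route via the identity $\bigl((q+1)a-pq\bigr)\bigl((q+1)b-pq\bigr)=p^2q^2$ is the standard ``complete the rectangle'' manoeuvre: it reduces everything at once to enumerating divisors of $p^2q^2$ congruent to $1\pmod{q+1}$, makes the $a\leftrightarrow b$ symmetry manifest as $d\leftrightarrow e$, and handles negative $a,b$ without extra work (indeed it pins down the admissible $\zeta$ explicitly as $\{1,-q,q^2\}$, which the paper leaves implicit). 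The paper's argument, by contrast, is slightly more ad hoc but avoids the multiply-and-add step and arrives at the parameter $\zeta$ organically rather than through a divisor count. Both are entirely elementary; yours is the more systematic of the two.
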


\begin{proof}
Rewrite equation \ref{e:1} as $pq(a+b) = (q+1)ab$. Then $p | a$, $p | b$, or both. Consider first the case where $p | a$ and $p | b$. Write $a = pk$ and $b=pl$ for $k, l\in \mathbb{Z}^+$. Then $pq(pk + pl) = (q+1)p^2kl \implies q(k+l) = (q+1)kl$. If $k, l \geq 2$, this equation cannot be satisfied, so one of $k, l = 1$. Suppose $k=1$, so $a=p$, and $q(1+l)=(q+1)l\implies l=q$, leading to the trivial solutions $a=p, b=pq$, and, \emph{mutatis mutandis}, $a=pq, b=p$. \\

Suppose now that $p | a$ but $p \nmid b$. Again writing $a=pk$ for $k\in\mathbb{Z}^+$, we have $pq(pk + b) = (q+1)pkb \implies q(pk + b) = (q+1)kb$, or equivalently, $qpk = (qk + k - q)b$. Since $ p \nmid b$, it must be the case that $p | qk + k - q$. Write $qk + k - q = \zeta p$ for $\zeta \in \mathbb{Z}^+$. Then $qk + k = \zeta p + q \implies (q+1)k = \zeta(p-1) + \zeta + q$ Since $q+1 | p-1$, $\zeta \equiv 1 \mod q+1$. \\

Solving for $k$ and then $a$ and $b$ above leads to the formul\ae\hspace{0.1em} given in equation \ref{e:2}. To obtain solutions in $\mathbb{Z}$, it must be the case that $q+1 | \zeta p^2 + pq$ and both $\zeta$ and $q+1$ divide $\zeta pq + q^2$. The first condition is always satisfied given our earlier assumption that $q + 1 | p - 1$: writing $\zeta = \alpha(q+1) + 1$, we have $\zeta p^2 + pq = \alpha(q+1)p^2 + p^2 + pq = \alpha(q+1)p^2 + p(p + q)$, and $p + q = p - 1 + q + 1$ is divisible by $q+1$. It can similarly be shown that $q+1 | \zeta pq + q^2$. However, $\zeta | \zeta pq + q^2$ if and only if $\zeta | q^2$, leading to the result in Equation \ref{e:2}. If instead we assume that $p | b$ and $p \nmid a$, then, \emph{mutatis mutandis}, we arrive at the solutions given in Equation \ref{e:3}. 
\end{proof}

\bibliographystyle{acm}
\bibliography{diophantine.bib}

\end{document}